  \providecommand*{\toclevel@author}{999}
  \providecommand*{\toclevel@title}{0}
\theoremstyle{plain}
  \newtheorem{theorem}{Theorem}
  \newtheorem{corollary}{Corollary}
\theoremstyle{definition}
  \newtheorem{definition}{Definition}
  \newtheorem{example}{Example}
\theoremstyle{remark}
  \newtheorem{remark}{Remark}
\newcommand{\No}{\mathbb{N}_0}
\newcommand{\Bf}[1]{\boldsymbol{#1}}
\newcommand{\xb}{\Bf {x}}
\newcommand{\cb}{\Bf {c}}
\newcommand{\kb}{\Bf {k}}
\newcommand{\ub}{\Bf {u}}
\newcommand{\CT}{\widetilde{C}}
\DeclareMathOperator{\rank}{rank}
\newcommand{\Fb}{\mathbb{F}_b}
\newcommand{\Disc}[1]
\def\cP{{\mathcal P}}
\title{Column reduced digital nets}
\author{V. Anupindi, P. Kritzer}
\begin{document}
\maketitle

\begin{abstract}
\noindent Digital nets provide an efficient way to generate integration nodes of quasi-Monte Carlo (QMC) rules. For certain applications, as e.g. in Uncertainty Quantification, we are interested in obtaining a speed-up in computing products of a matrix with the vectors corresponding to the nodes of a QMC rule. In the recent paper \textit{The fast reduced QMC matrix-vector product} (J. Comput. Appl. Math. 440, 115642, 2024), a speed up was obtained by using so-called reduced lattices and row reduced digital nets. 
In this work, we propose a different multiplication algorithm where we exploit the repetitive structure of column reduced digital nets instead of row reduced digital nets. This method has advantages over the previous one, as it facilitates the error analysis when using the integration nodes in a QMC rule.
We also provide an upper bound for the quality parameter of column reduced digital nets, and numerical tests to illustrate the efficiency of the new algorithm.
\end{abstract}

\section{Introduction}\label{sec:intro}

\subsection{The problem setting}
In many applications, such as in statistics, finance, and uncertainty quantification, we would like to numerically compute 
\begin{equation}\label{eq:integral}
 \int_D f(\bsx^\top A) \rd \mu(\bsx), 
\end{equation}
where $A$ is a real $s\times \tau$ matrix, by quasi-Monte Carlo (QMC) rules 
\begin{equation}\label{eq:QMC_approx}
Q_N (f):=\frac{1}{N}\, \sum_{k=0}^{N-1} f(\bsx_k^\top A), 
\end{equation}
where $\bsx_k =(x_k^{(1)},\ldots,x_k^{(s)})^\top$ are column vectors corresponding to the points used in the QMC rule. Problems of this kind particularly arise in some important applications in statistics and uncertainty quantification. For instance, this approach can be used when approximating the expected value of a function with a multivariate normal random variable with some given covariance matrix, or when approximating the expected value of the solution of a PDE with random coefficients, see, e.g., \cite{DKLS15}. 
In some cases the domain $D$ in \eqref{eq:integral} will be 
chosen as $D=[0,1]^s$. In this case, it is natural to directly use QMC sample points like lattice point sets (see \cite{DKP22}) or $(t,m,s)$-nets (see \cite{DP10}) as the points $\bsx_k$. This is the situation we shall mostly consider in the present paper, in particular with respect to the error analysis in Section \ref{subsec:err_ana}. 
However, in some applications, such as those mentioned above, the domain may be, e.g., $D=\RR^s$. Then the points $\bsx_k$ frequently are of the form $\bsx_k=\boldsymbol{\Phi}^{-1}(\bsy_k)$, $k\in\{0,1,\ldots,N-1\}$, where the $\bsy_k$ are the QMC sample points, and $\boldsymbol{\Phi}^{-1}$ is the inverse of the cumulative distribution function of a standard normal distribution, which is applied component-wise to vectors. In order to avoid that certain coordinates of the sample points are mapped to $\pm\infty$, one can first shift the $\bsy_k$ to the right by a sufficiently small quantity. Many results 
presented here also hold for this case, in particular the matrix product algorithm presented in Section \ref{subsec:red_mat_column}, see Remark \ref{remark:point_transform}. We also refer to \cite{DKLS15}, where a similar situation is studied using a different computational method.

Computing the vector-matrix products $\bsx_k^\top A$ for all $k\in\{0,\ldots,N-1\}$ takes $\mathcal{O}(N s\, \tau)$ operations. 
This problem is equivalent to computing the matrix-matrix product $XA$, where
\[
 X=\left[\bsx_0^\top,\bsx_1^\top,\ldots,\bsx_{N-1}^\top\right]^\top
\]
is the $N\times s$ matrix whose $k$-th row is $\bsx_k$. Computing $XA$
can be infeasible in situations where $s$ and $N$ are both large (which happens in many applications). 

In the paper \cite{DKLS15}, it is shown that when using particular types of QMC rules, 
the cost to evaluate $Q_N(f)$, as in \eqref{eq:QMC_approx}, can be reduced to only $\calO(\tau \,N \log N)$ operations provided that $\log N \ll s$.
This reduction in computational cost is achieved by a fast matrix-matrix
multiplication exploiting the fact that for specifically chosen point sets, 
such as (polynomial) lattice rules, 
the matrix $X$ can be re-ordered to be of circulant structure.
 
The recent paper \cite{DEHKL23} studies an alternative method to reduce the computation time by imposing a certain structure of the points $\bsx_0,\ldots,\bsx_{N-1}$. The key idea of this approach is to find situations in which the components of the points $\bsx_k$ have a certain repetitive structure, which then facilitates systematic fast computation of the products $\bsx_k^\top A$. This can be achieved by suitable modifications of (polynomial) lattice point sets using ideas from \cite{DKLP15}, but how to implement this idea for digital nets, which are more general than polynomial lattice point sets and among the most commonly used QMC node sets, 
is not straightforward. In \cite{DEHKL23}, the authors made a first attempt and studied a reduction of the computation time for digital $(t,m,s)$-nets by setting certain \textit{rows} of the generating matrices to zero (we refer to Section \ref{sec:digital} for 
the precise definition of digital nets and their generating matrices). The basic idea in \cite{DEHKL23} is that for each of the $s$ generating matrices $C_j^{(m)}$, $1\le j\le s$, of 
the digital net, we identify a so-called reduction index $w_j\in \mathbb{Z}$ and set the last $w_j$ rows of $C_j^{(m)}$ equal to zero. As shown in \cite{DEHKL23}, this introduces a certain repetitiveness in the entries of the matrix $X$ and speeds up the computation of the matrix-matrix product $XA$. We call such digital nets \textit{row reduced} digital nets. However, for assessing the quality of reduced nets when used in QMC rules, it is more natural to study the situation where certain \textit{columns} of the generating matrices are set to zero, since this directly corresponds to the reduced (polynomial) lattice point sets, resulting in the consideration of \textit{column reduced} digital nets. The idea of column reduced digital nets is to set the last $w_j$ columns of the generating matrix $C_j^{(m)}$, $1\le j\le s$, equal to zero, instead of setting rows equal to zero. 
Furthermore, in the present paper, we focus on digital nets that are obtained from \textit{digital sequences}, which implies additional structure in the generating matrices. Again, the approach of using column reduced digital nets yields a speed-up in the computation of $XA$, but as we will see below, it also makes it easier to assess the properties of the resulting column reduced digital nets than doing the same for row reduced digital nets. Furthermore, the error analysis for 
approximating \eqref{eq:integral} by \eqref{eq:QMC_approx} becomes easier. This idea was already mentioned (but not pursued) in \cite{DEHKL23}, and
this is what we intend to do in the present paper.

\subsection{Digital nets and sequences}\label{sec:digital}

In this section, we give the definitions of $(t,m,s)$-nets and $(t,s)$-sequences, the digital construction method for these, and shortly outline how to assess their quality. 

Let $\Fb$ be a finite field with $b$ elements, where $b$ is prime. We identify the elements of $\Fb$ with the set $\{0,1,\dots,b-1\}$. An \textit{elementary interval} in base $b$ and dimension $s$ is a half-open interval of the form $\prod_{j=1}^{s} [a_jb^{-d_j}, (a_j + 1)b^{-d_j} )$ where the $a_j,d_j$ are nonnegative integers with $0 \leq a_j < b^{d_j}$ for $1 \leq j \leq s$.

In the following, we recall the definition of $(t,m,s)$-nets and $(t,s)$-sequences, which have the property that the number of points in certain elementary intervals is proportional to their sizes. This guarantees a degree of uniform distribution of the point set in $[0,1)^s$, which is desirable when using such a point set in a QMC rule. 
For detailed discussions on $(t,m,s)$-nets and $(t,s)$-sequences, we refer to \cite{DP10, N92}.

\begin{definition}\label{def:tms_net}
For a given dimension $s \geq 1$ and nonnegative integers $t,m$ with $0 \leq t \leq m$, a \emph{$(t,m,s)$-net} in base $b$ is a point set $\mathcal{P} \subset [0,1)^s$ consisting of $b^m$ points such that any elementary interval in base $b$ with volume $b^{t-m}$ contains exactly $b^t$ points of $\mathcal{P}$.

A sequence $(\xb_{0},\xb_1,\dots)$ of points in $[0,1)^s$ is called a \emph{$(t,s)$-sequence} in base $b$ if for all integers $m\ge t$ and $k\geq 0$, the point set consisting of the points $\xb_{kb^m},\dots,\xb_{kb^m + b^m - 1}$ forms a $(t,m,s)$-net in base $b$. 
\end{definition}

Note that the lower the value of $t$ of a $(t,m,s)$-net or a $(t,s)$-sequence, the more uniformly the points are distributed in $[0,1)^s$, which is a desirable property when the point set is used as an integration node set in a QMC rule. This is the reason why $t$ is referred to as the \textit{quality parameter} of a net or sequence. 

A $(t,m,s)$-net is called \textit{strict}, if it does not fulfill the requirements of a $(t-1,m,s)$-net (for $t\ge 1$), and analogously for $(t,s)$-sequences. In general, any $(t,m,s)$-net is also a $(t+1,m,s)$-net for $t < m$.

We point out that it is, in general, a non-trivial combinatorial question of which values of $t$ can be reached for which configurations of the other parameters. We again refer to \cite{DP10, N92} for details.

A common way to generate $(t,m,s)$-nets and $(t,s)$-sequences is using the \emph{digital method}, which was first introduced by Niederreiter in \cite{N92a}. 
\begin{definition}
    A \emph{digital $(t,m,s)$-net over $\Fb$} is a $(t,m,s)$-net $\mathcal{P} = \{\xb_0,\dots,\xb_{b^{m}-1} \}$ where the points are constructed as follows. Let $C_1^{(m)},\dots,C_s^{(m)}$ in $\Fb^{m \times m}$ be matrices over $\Fb$. To generate the $k$-th point in $\mathcal{P}$, $0\le k\le b^m-1$, we use the $b$-adic expansion $k = \sum_{i=0}^{m-1} k_i b^i$ with digits $k_i \in \{0,\dots,b-1\}$ which we denote by $\overrightarrow{k} = (k_0,\dots,k_{m-1})^{\top}$. The $j$-th coordinate $x_{k,j}$ of $\xb_k = (x_{k,1},\dots,x_{k,s})$ is obtained by computing
    \[
    \overrightarrow{x}_{k,j} := C_j^{(m)} \,\overrightarrow{k},
    \]
    and then setting 
    \[
   x_{k,j}:=  \overrightarrow{x}_{k,j} \cdot (b^{-1},b^{-2},\ldots,b^{-m}).
    \]
    Similarly, a \emph{digital} $(t,s)$-sequence $\mathcal{S}$ over $\Fb$ 
    is generated by infinite matrices $C_1,\dots,C_s$, where 
    \begin{equation}\label{eq:def_Ci}
    C_j = (c_{i,r}^{(j)})_{i,r \in \N} \in \Fb^{\N \times \N}. 
    \end{equation}
    To generate the $k$-th point in $\mathcal{S}$, $k\ge 0$, we use the $b$-adic expansion $k = \sum_{i=0}^{\infty} k_i b^i$ with digits $k_i \in \{0,\dots,b-1\}$ which we denote by $\overrightarrow{k} = (k_0,k_1,\dots)^{\top}$. The 
    $j$-th coordinate $x_{k,j}$ of $\xb_k = (x_{k,1},\dots,x_{k,s})$ is obtained by computing
    \[
    \overrightarrow{x}_{k,j} :=  C_j^{(m)} \,\overrightarrow{k},
    \]
    and then setting 
    \[
   x_{k,j}:=  \overrightarrow{x}_{k,j} \cdot (b^{-1},b^{-2},\ldots).
    \]
\end{definition}

Note that from any digital $(t,s)$-sequence over $\Fb$ with generating matrices $C_1,\ldots,C_s$, we can, for $m\ge t$, derive a digital $(t,m,s)$-net over $\Fb$, simply by considering the point set generated by the left upper $m\times m$ submatrices $C_1^{(m)},\ldots,C_s^{(m)}$ of $C_1,\ldots,C_s$. This is equivalent to considering the first $b^m$ points of the $(t,s)$-sequence.

As pointed out above, the quality of a $(t,m,s)$-net or $(t,s)$-sequence is determined by its $t$-value. For digital $(t,m,s)$-nets and $(t,s)$-sequences, we can determine the $t$-value from rank conditions on the generating matrices, using a quantity that we shall refer to as the \textit{linear independence parameter}.

\begin{definition}
For any integers $1 \leq j \leq s$ and $m \geq 1$, let 
$C_{1}^{(m)}, C_2^{(m)},\ldots, C_s^{(m)}$ be $m \times m$ matrices over $\Fb$.
Then the \emph{linear independence parameter} $\rho_m 
(C_{1}^{(m)}, C_2^{(m)},\ldots, C_s^{(m)})$ is defined as the largest integer such that for any choice of $d_1,\dots,d_s \in \No$, with $d_1 +\dots+d_s = \rho_m$, we have that
\begin{align*}
    &\text{the first $d_1$ rows of $C_1^{(m)}$ together with}\\
    &\text{the first $d_2$ rows of $C_2^{(m)}$ together with}\\
    &\vdots \\
    &\text{the first $d_s$ rows of $C_s^{(m)}$}
\end{align*}
are linearly independent over $\Fb$. 
\end{definition}

It is known (see, e.g., \cite{N92,DP10}) that the generating matrices $C_{1}^{(m)}, C_2^{(m)},\ldots, C_s^{(m)}$ of a digital $(t,m,s)$-net over $\Fb$ satisfy 
\begin{equation} \label{eq:rho_t_relation}
    \rho_m (C_{1}^{(m)}, C_2^{(m)},\ldots, C_s^{(m)}) \ge m-t,
\end{equation}
where we have equality if the net is a strict $(t,m,s)$-net. Similarly, for the generating matrices $C_1,\ldots,C_s$ of a digital $(t,s)$-sequence over $\Fb$ we must have
$\rho_m(C_1^{(m)},\dots,C_s^{(m)}) \ge m-t$ for all $m\ge \max\{t,1\}$, where 
$C_j^{(m)}$ denotes the left upper $m\times m$ submatrix of $C_j$ for $j\in \{1,\ldots,s\}$.
Hence, for digital nets and sequences, their quality can be assessed by checking linear independence conditions on the rows of the generating matrices.

\section{The $t$-values  of column reduced digital nets} \label{sec:col_red_t}

\subsection{Column reduction for $(t,m,s)$-nets}

Now we turn towards the primary object of our study, which is the column reduced digital nets. We note that if we take a general digital $(t,m,s)$-net and set some columns of its generating matrices to zero, we cannot control the quality parameter of the reduced net.  However, since digital $(t,s)$-sequences require stronger conditions on their generating matrices, we can estimate the quality parameter of reduced digital $(t,m,s)$-nets derived from digital sequences by taking the nets generated by the left upper $m\times m$ submatrices of the generating matrices of the sequences.

For $m\ge t$, we consider the digital $(t,m,s)$-net generated by the matrices $C_1^{(m)},\dots,C_s^{(m)}$, derived via the above principle from a digital $(t,s)$-sequence with generating matrices $C_1,\ldots,C_s$, $C_j=(c_{i,r}^{(j)})$, $i,r\in\NN$.
 
Let $0=w_1 \le \cdots  \le w_s \in \No$, we call these numbers the \textit{reduction indices}, for the 
generating matrices $C_j^{(m)}$. We derive the corresponding reduced matrices $\CT_1^{(m)},\ldots,\CT_s^{(m)}$, with $\CT_j^{(m)}=(\widetilde{c}_{i,r}^{(j)})$, $i,r\in \{1,2,\ldots,m\}$, for $1\le j\le s$,
where
\begin{equation}\label{eq:def_Ci_red}
    \widetilde{c}_{i,r}^{(j)} = \begin{cases}
        c_{i,r}^{(j)} \quad &\text{ if } r \in \{ 1,\dots, m - \min{(m, w_j)}\},\\
        0 &\text{ if } r \in \{ m - \min{(m, w_j)} + 1,\dots,m \}.
    \end{cases}
\end{equation}

That is, the first $m - \min{(m,w_j)}$ columns of $\CT_j^{(m)}$ are the same as the columns of the matrix $C_j^{(m)}$, and we set the last $\min{(m,w_j)}$ columns to zero, i.e, if $w_j < m $,
\[ 
\CT_j^{(m)} = \begin{pmatrix}
c_{1,1}^{(j)}      & \dots &  c_{1, (m-w_{j})}^{(j)} & 0 & \dots &0  & \\
\vdots & \ddots &       &     \vdots  &  \vdots  &  \\
c_{(m-w_{j}),1}^{(j)}     &   \dots     & c_{(m-w_{j}),(m-w_{j})}^{(j)}     &  0 &\dots    & 0 &  \\
\vdots & \ddots &  \vdots     &   \vdots  &  & \vdots \\ 
c_{m,1}^{(j)}     &   \dots     & c_{m,(m-w_{j})}^{(j)}     &  0 &\dots    & 0 &  \\
\end{pmatrix}.
\]

We are interested in estimating the quality parameter of the digital net generated by the $\CT_j^{(m)}$. 

Apart from the main motivation outlined in Section \ref{sec:intro}, there is another computational advantage of using 
column reduced digital nets. Indeed, by the general construction principle of digital point sets, the generating matrices of a digital net or sequence are multiplied over $\Fb$ by vectors representing the digits of the indices of the elements of the point set. By replacing the matrices $C_j^{(m)}$ by $\CT_j^{(m)}$, we increase the sparsity of the generating matrices, which saves computation time in the generation of the point set.

\begin{theorem}\label{lem:t_value_red}
Let $\cP$ be a digital $(t,m,s)$-net over $\Fb$ with generating matrices 
$C_1^{(m)}, \dots ,C_s^{(m)}$ derived from a digital $(t,s)$-sequence over $\Fb$, 
where we assume that $m\ge t$. 
Let $\CT_1^{(m)},\ldots,\CT_s^{(m)}$ be as defined in \eqref{eq:def_Ci_red} with respect to reduction indices $0= w_1 \leq \dots \leq w_s$ and let $\widetilde{t}$ 
be the minimal quality parameter of the net generated by the $\CT_j^{(m)}$. Then,
\begin{equation}\label{eq:claim_rho_CT}
    \max\{0,m-w_s-t\} \leq \rho_m \left(\CT_1^{(m)},\dots,\CT_s^{(m)}\right) \leq \max\{0,m - w_s\},
\end{equation}
and $ \widetilde{t} \le \min\{m,w_s +t\}$.

\medskip

Furthermore, if $\cP$ is a strict digital $(t,m,s)$-net, it is true that
\begin{equation}\label{eq:claim_rho_CT_strict}
    \rho_m \left(\CT_1^{(m)},\dots,\CT_s^{(m)}\right) \leq \max\{0,m - \max\{t,w_s\}\}.
\end{equation}
\end{theorem}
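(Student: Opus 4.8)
The plan is to deduce the bound on $\widetilde t$ from the two-sided estimate \eqref{eq:claim_rho_CT}, and then concentrate on \eqref{eq:claim_rho_CT} and \eqref{eq:claim_rho_CT_strict} themselves. By the definition of $\widetilde t$, the net generated by the $\CT_j^{(m)}$ is a strict digital $(\widetilde t,m,s)$-net, so the equality case of \eqref{eq:rho_t_relation} gives $\rho_m(\CT_1^{(m)},\dots,\CT_s^{(m)}) = m-\widetilde t$. Hence $\widetilde t = m - \rho_m(\CT_1^{(m)},\dots,\CT_s^{(m)}) \le m-(m-w_s-t) = w_s+t$ using the lower bound in \eqref{eq:claim_rho_CT}, and together with the trivial $\widetilde t\le m$ this yields $\widetilde t\le\min\{m,w_s+t\}$. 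So everything rests on the estimates for $\rho_m$.

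For the upper bound I would argue directly from the definition. Writing $\rho := \rho_m(\CT_1^{(m)},\dots,\CT_s^{(m)})$ and taking the admissible choice $d_s=\rho$, $d_j=0$ for $j\ne s$ (legitimate since $\rho\le m$ always), the definition forces the first $\rho$ rows of $\CT_s^{(m)}$ to be linearly independent. But every row of $\CT_s^{(m)}$ has its last $\min\{m,w_s\}$ entries equal to zero, so all these rows lie in a coordinate subspace of $\Fb^m$ of dimension $\max\{0,m-w_s\}$; therefore $\rho\le\max\{0,m-w_s\}$. The degenerate cases $w_s=0$ and $w_s\ge m$ are covered by the same dimension count.

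The heart of the argument is the lower bound, where the key idea is to project onto the first $m-w_s$ coordinates. I may assume $m-w_s-t>0$, since otherwise the claim is the trivial $\rho_m\ge 0$; in particular $m-w_s\ge\max\{t,1\}$. Let $P\colon\Fb^m\to\Fb^{m-w_s}$ retain the first $m-w_s$ coordinates. Because $w_j\le w_s$ for every $j$, the first $m-w_s$ entries of any row of $\CT_j^{(m)}$ agree with those of $C_j^{(m)}$; consequently, for $i\le m-w_s$, applying $P$ to the $i$-th row of $\CT_j^{(m)}$ returns exactly the $i$-th row of the submatrix $C_j^{(m-w_s)}$. Now fix any $d_1,\dots,d_s\in\No$ with $d_1+\dots+d_s=m-w_s-t$; then each $d_j\le m-w_s$, so the projected rows are genuine rows of the $C_j^{(m-w_s)}$. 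Since the $C_j$ come from a digital $(t,s)$-sequence, the sequence property at level $m-w_s$ gives $\rho_{m-w_s}(C_1^{(m-w_s)},\dots,C_s^{(m-w_s)})\ge(m-w_s)-t$, so these projected rows are linearly independent over $\Fb$. As linear independence of the images $P(v)$ forces linear independence of the $v$, the corresponding rows of the $\CT_j^{(m)}$ are independent as well; this holds for every admissible distribution, so $\rho_m(\CT_1^{(m)},\dots,\CT_s^{(m)})\ge m-w_s-t$, completing \eqref{eq:claim_rho_CT}.

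For the strict case I would first record the monotonicity fact $\rho_m(\CT_1^{(m)},\dots,\CT_s^{(m)})\le\rho_m(C_1^{(m)},\dots,C_s^{(m)})$: the same projection observation shows that whenever the reduced rows are independent the original rows are, so independence for the $\CT_j^{(m)}$ implies independence for the $C_j^{(m)}$. When $\cP$ is strict, \eqref{eq:rho_t_relation} holds with equality and $\rho_m(C_1^{(m)},\dots,C_s^{(m)})=m-t$, whence $\rho_m(\CT_1^{(m)},\dots,\CT_s^{(m)})\le m-t$. Combining this with the already-proved $\rho_m\le\max\{0,m-w_s\}$ and with $\rho_m\ge 0$ gives $\rho_m\le\min\{m-t,\max\{0,m-w_s\}\}=\max\{0,m-\max\{t,w_s\}\}$ (using $m\ge t$), which is \eqref{eq:claim_rho_CT_strict}. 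I expect the main obstacle to be pinning down the correct projection level $m-w_s$ and checking that it is the sequence hypothesis, rather than merely a net hypothesis, that licenses invoking the independence property at the smaller size $m-w_s$; the surrounding steps are routine linear algebra.
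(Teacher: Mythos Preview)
Your arguments for the two inequalities in \eqref{eq:claim_rho_CT} and for the bound $\widetilde t\le\min\{m,w_s+t\}$ are correct and coincide with the paper's: the upper bound via the dimension of the coordinate subspace carrying the rows of $\CT_s^{(m)}$, and the lower bound via projection onto the first $m-w_s$ coordinates combined with the sequence property at level $m-w_s$, are exactly what the paper does.

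The strict-case argument, however, has a genuine gap. You justify the monotonicity $\rho_m(\CT_1^{(m)},\dots,\CT_s^{(m)})\le\rho_m(C_1^{(m)},\dots,C_s^{(m)})$ by ``the same projection observation,'' i.e., that independence of images under a linear map forces independence of the preimages. But the passage from a $C_j^{(m)}$-row to the corresponding $\CT_j^{(m)}$-row zeros out the last $w_j$ coordinates, and $w_j$ depends on $j$; there is no \emph{single} linear map on $\Fb^m$ effecting all of these at once, so the observation does not apply. Concretely, take $b=2$, $m=3$, $s=2$, $w_1=0$, $w_2=1$, and suppose the first rows of $C_1^{(3)}$ and $C_2^{(3)}$ are both $(1,0,1)$. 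These two $C$-rows are linearly dependent, yet after reduction they become $(1,0,1)$ (unchanged) and $(1,0,0)$ (last entry zeroed), which are independent. Hence the implication ``$C$-rows dependent $\Rightarrow$ $\CT$-rows dependent'' (equivalently, your monotonicity) fails in general. The paper's own proof of \eqref{eq:claim_rho_CT_strict} asserts precisely this implication without further justification, so your approach does mirror the paper's---but the step is not substantiated in either place.
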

\begin{proof}
We note that we have $m\ge t$ by assumption. If $w_s\ge m$, then we trivially have 
$\rho_m \left(\CT_1^{(m)},\dots,\CT_s^{(m)}\right)=0$, as $\CT_s^{(m)}$ only contains zeros, and \eqref{eq:claim_rho_CT} holds.

Therefore, we will assume for the rest of the proof that $w_s < m$. 

We prove the second inequality in \eqref{eq:claim_rho_CT} first.
We have 
\begin{equation*} 
\CT_s^{(m)} = 
\begin{pmatrix}
  c_{1,1}^{(s)}      & \dots &  c_{1,(m-w_{s})}^{(s)} & 0 & \dots & 0 & \\
  \vdots & \ddots &       &     \vdots  &   & \vdots&  \\
  c_{(m-w_{s}),1}^{(s)}     &  \dots      & c_{(m-w_{s}),(m-w_{s})}^{(s)}     &  0     &\dots  & 0 & \\
  \vdots & \ddots  &  \vdots     &   \vdots   & & \vdots \\
  c_{m,1}^{(s)}      & \dots &  c_{m, (m-w_{s})}^{(s)} & 0 & \dots & 0 & \\  
\end{pmatrix}.
\end{equation*} 
Let $D$ be the matrix containing the first $d_1$ rows of $\CT_1^{(m)}$, the first $d_2$ rows of $\CT_2^{(m)}$, etc., up to the first $d_s$ rows of $\CT_s^{(m)}$, 
where $d_1,\ldots,d_s$ are nonnegative integers satisfying $d_1+\cdots + d_s= m-w_s$. For 
the special choice $(d_1,\dots,d_s) = (0,\dots,0,m-w_s)$, we have $\rank(D) = \rank(\CT_s^{(m)}) \leq m-w_s$. Therefore,
\begin{equation*}
   \rho_m\left(\CT_1^{(m)},\dots,\CT_s^{(m)}\right) \leq m-w_s.
\end{equation*}

Now we prove the first inequality in \eqref{eq:claim_rho_CT}. If $m-w_s-t<0$, the inequality is trivial. 

Otherwise, i.e., if $m-w_s\ge t$, we know that 
\begin{equation}\label{eq:rho_C}
    \rho_k \left(C_1^{(k)},\dots,C_s^{(k)}\right) \geq k-t,
\end{equation}
for any $k\ge t$, since our net is derived from a digital $(t,s)$-sequence. Here, $C_j^{(k)}$, $1\le j\le s$, denotes the left upper $k\times k$ submatrix of $C_j$. In particular, we observe that for the left upper $(m-w_s) \times (m-w_s)$ submatrices  
of $C_1,\ldots,C_s$,
$$
\rho_{(m-w_s)}(C_1^{(m-w_s)},\dots,C_s^{(m-w_s)}) \geq m-w_s - t.
$$

We now consider arbitrary integers $d_1,\dots,d_s \geq 0$ with $d_1 + \dots + d_s =  m- w_s-t$. 
Let $\kb_{i}^{(j)}$ denote the $i$-th row vector of $C_j^{(m-w_s)} \in \Fb^{(m-w_s) \times (m-w_s)}$.
We know that 
\begin{equation}\label{eq:k_vec_indepen}
    \kb_{1}^{(1)},\dots,\kb_{d_1}^{(1)}, \kb_{1}^{(2)},\dots,\kb_{d_2}^{(2)},\dots, \dots,  \kb_{1}^{(s)},\dots,\kb_{d_s}^{(s)}
\end{equation}
are linearly independent over $\Fb$.
Let $\cb_{i}^{(j)}$ denote the $i$-th row vector of $\CT_i^{(m)} \in \Fb^{m \times m}$. We observe that for $1\leq i \leq m-w_s$,
\begin{equation*}
    \cb_{i}^{(j)} = (\kb_{i}^{(j)},\ub_{i}^{(j)}) \in \Fb^{1 \times m },
\end{equation*}
where the $\kb_{i}^{(j)} $ are as above and $\ub_{i}^{(j)} \in \Fb^{1 \times w_s}$.
 
The row vectors
\begin{equation}\label{eq:c_vec_indepen}
    \cb_{1}^{(1)},\dots,\cb_{d_1}^{(1)}, \cb_{1}^{(2)},\dots,\cb_{d_2}^{(2)}, \dots,  \dots,\cb_{1}^{(s)},\dots,\cb_{d_s}^{(s)}
\end{equation}
are linearly independent, since otherwise the row vectors in \eqref{eq:k_vec_indepen}, which are projections of $\cb_i^{(j)}$ onto the first $m-w_s$ entries, would be linearly dependent. 
Therefore,
\begin{equation*}
    \rho_m\left(\CT_1^{(m)},\dots,\CT_s^{(m)}\right) \geq m-w_s-t.
\end{equation*}
This concludes the proof of \eqref{eq:claim_rho_CT}. Using \eqref{eq:rho_t_relation} and the lower bound in \eqref{eq:claim_rho_CT}, we obtain the upper bound for $\widetilde{t}$.

\medskip

It remains to show \eqref{eq:claim_rho_CT_strict}.

Let $D$ be the matrix containing the first $d_1$ rows of $\CT_1^{(m)}$, the first $d_2$ rows of $\CT_2^{(m)}$, etc., up to the first $d_s$ rows of $\CT_s^{(m)}$, 
where $d_1,\ldots,d_s$ are nonnegative integers. As above, for 
the special choice $(d_1,\dots,d_s) = (0,\dots,0,m-w_s)$, we have 
$\rank(D) = \rank(\CT_s^{(m)}) \leq (m-w_s)$. So,
\begin{equation*}
   \rho_m \left(\CT_1^{(m)},\dots,\CT_s^{(m)}\right) \leq m-w_s.
\end{equation*}
However, since we assume that $\cP$ is a strict digital $(t,m,s)$-net in this part of the proof, there must 
exist a choice of $(d_1,\dots,d_s)$ with $d_1 + \cdots + d_s=m-t+1$ such that the corresponding rows of 
$C_1^{(m)},\ldots,C_s^{(m)}$ are linearly dependent, and therefore also the corresponding rows of $\CT_1^{(m)},\ldots,\CT_s^{(m)}$
are linearly dependent. This yields
\begin{equation*}
  \rho_m  \left(\CT_1^{(m)},\dots,\CT_s^{(m)}\right)  \leq m-t,
\end{equation*}
so we must have
\begin{equation*}
  \rho_m \left(\CT_1^{(m)},\dots,\CT_s^{(m)}\right)  \leq m-\max\{t,w_s\}.
\end{equation*}
\end{proof}

\begin{remark}
    For $t=0$ and $w_s < m$ in \cref{lem:t_value_red}, we obtain equality in \eqref{eq:claim_rho_CT} and therefore 
\begin{equation*}
        \rho_m \left(\CT_1^{(m)},\dots,\CT_s^{(m)}\right) = m-w_s,
    \end{equation*}
    and $\widetilde{t} = w_s$.
\end{remark}

\begin{remark}\label{rem:lower_sharp}
 We now give an example that illustrates that the lower bound in Theorem \ref{lem:t_value_red} is sharp. 
 
 Assume that $Q$ is a digital $(0,2)$-sequence with generating matrices $D_1$ and $D_2$ (examples of $Q$ exist, 
 e.g., by choosing as $Q$ a Niederreiter sequence, see \cite{N92a}). 
 
 From $Q$, we construct a digital $(t,2)$-sequence $P$, by prepending exactly $t$ zero columns to both $D_1$ and $D_2$. That is, 
 we construct new generating matrices $C_j$, $j\in \{1,2\}$, such that
 \[
  C_j:=\begin{pmatrix}
        0 & \ldots & 0  &\vline &  & & \\
        \vdots & \vdots & \vdots  &\vline& & D_j & \\
        0 & \ldots & 0 &\vline &  &  & \\
        \vdots & \vdots & \vdots &\vline & & & \\
       \end{pmatrix}.
 \]
 It is easily checked that $C_1, C_2$ generate a digital $(t,2)$-sequence; indeed, let $m\ge t$ be arbitrarily chosen but fixed. 
 Then the matrices $C_1^{(m)}, C_2^{(m)}$ contain the matrices $D_1^{(m-t)}, D_2^{(m-t)}$ as submatrices. As $D_1, D_2$ generate a $(0,2)$-sequence,
 for any $d_1,d_2 \in \NN_0$ with $d_1 + d_2 =m-t$ the first $d_1$ rows of $D_1^{(m-t)}$ together with the first $d_2$ rows of $D_2^{(m-t)}$ must 
 be linearly independent, so also the corresponding rows of $C_1^{(m)}$ and $C_2^{(m)}$ (with zeros prepended) must be linearly independent. This 
 establishes that $C_1$ and $C_2$ generate a $(t,2)$-sequence. 
 
 Let now $m\ge t$, and let $w_1=0$, and $w_2\ge w_1$ be reduction indices such that $m-w_2-t\ge 0$. Then $\CT_1^{(m)}=C_1^{(m)}$, and
 \[
  \CT_2^{(m)}=\begin{pmatrix}
        0 & \ldots & 0  &\vline &  & &   \vline & 0 & \ldots  & 0\\
        \vdots & \vdots & \vdots  &\vline& & D_2^{(m\times (m-t-w_2))} & \vline & \vdots & \vdots & \vdots\\
        0 & \ldots & 0 &\vline &  &  & \vline & 0 & \ldots & 0\\
       \end{pmatrix},
 \]
where $D_2^{(m\times (m-t-w_2))}$ denotes the left upper $m\times (m-t-w_2)$ submatrix of $D_2$. By Theorem \ref{lem:t_value_red}, we know that 
$\rho_m \left(\CT_1^{(m)},\CT_2^{(m)}\right) \ge m-t-w_2$. However, $\rho_m \left(\CT_1^{(m)},\CT_2^{(m)}\right) > m-t-w_2$ cannot hold since the first $m-t-w_2+1$ rows of $\CT_2^{(m)}$ must be linearly 
dependent. 

This implies that the lower bound in Theorem \ref{lem:t_value_red} is sharp. 
\end{remark}

\begin{remark}\label{rem:upper_sharp}

Next, we provide an example showing that the upper bound \eqref{eq:claim_rho_CT_strict} for strict digital nets in Theorem \ref{lem:t_value_red} is sharp. 

We use the same notation as in Remark \ref{rem:lower_sharp}.
We again start with the digital $(0,2)$-sequence $Q$. Again, we transform $Q$ into a $(t,s)$-sequence, now called $R$, with generating matrices 
$E_1$ and $E_2$. For $E_1$, we take the first generating matrix of $P$ from above, i.e., $E_1=C_1$. Furthermore, we choose $E_2$ as
\[
 E_2:=\begin{pmatrix}
    & & &\vline & 0 & \ldots & 0 & \ldots \\
    & D_2^{(t)} & & \vline & \vdots & \vdots & \vdots & \vdots \\
    & & &\vline & 0 & \ldots & 0 & \ldots \\
    \hline
    0 & \ldots & 0  &\vline &  & & &\\
        \vdots & \vdots & \vdots  &\vline& & D_2 & &\\
        0 & \ldots & 0 &\vline &  &  & &\\
        \vdots & \vdots & \vdots &\vline & & & &\\
 \end{pmatrix},
\]
where $D_2^{(t)}$ is the left upper $t\times t$ submatrix of $D_2$.
First, note that $R$ really is a strict $(t,2)$-sequence. Indeed, if we consider the matrix $E_1^{(m)}$ for $m< t$, this matrix only contains zeros, 
so the quality parameter of $R$ must be at least $t$. On the other hand, let $m\ge t$ and consider the matrices $E_1^{(m)}$ and 
$E_2^{(m)}$. Choose $d_1,d_2\ge 0$ such that $d_1 + d_2 =m-t$, and consider the first $d_1$ rows of $E_1^{(m)}$ together with the 
first $d_2$ rows of $E_2^{(m)}$. We distinguish two cases.
\begin{itemize}
 \item If $d_2 \le t$, then it is obvious that the first $d_1$ rows of $E_1^{(m)}$ together with the 
first $d_2$ rows of $E_2^{(m)}$ are linearly independent, as $D_1$ and $D_2$ generate a $(0,2)$-sequence.

\item If $d_2 >t$, we proceed as follows. Assume to the contrary that the first  $d_1$ rows of $E_1^{(m)}$ together with the 
first $d_2$ rows of $E_2^{(m)}$ were not linearly independent. By the structure of $E_1$ and $E_2$, this would immediately imply that 
the first $d_1$ rows of $D_1^{(m-t)}$ together with the first $d_2-t$ rows of $D_2^{(m-t)}$ are not linearly independent, where 
$d_1 + d_2-t = m-2t$, which would be a contradiction 
to the property that $D_1$ and $D_2$ generate a digital $(0,2)$-sequence. 
\end{itemize}

 Let now again $m\ge t$, and let $w_1=0$, and $w_2\ge w_1$ be reduction indices such that $m-w_2-t\ge 0$. Then $\widetilde{E}_1^{(m)}:= E_1^{(m)}$, and
 \[
  \widetilde{E}_2^{(m)}:=\begin{pmatrix}
    & & &\vline & 0 & \ldots & 0  &\vline & 0 & \ldots & 0\\
    & D_2^{(t)} & & \vline & \vdots & \vdots & \vdots &\vline &  \vdots & \vdots & \vdots\\
    & & &\vline & 0 & \ldots & 0 &\vline &  0 & \ldots & 0\\
    \hline
    0 & \ldots & 0  &\vline &  & & &\vline &  0 & \ldots & 0\\
        \vdots & \vdots & \vdots  &\vline& & D_2^{((m-t)\times (m-t-w_2))} & &\vline &  \vdots & \vdots & \vdots\\
        0 & \ldots & 0 &\vline &  &  & &\vline &  0 & \ldots & 0\\
 \end{pmatrix}.
 \]

We again distinguish two cases.
 
\paragraph{\textbf{Case 1:} \boldmath{$\max\{t,w_2\}=w_2$}.} 
We claim that $\rho_m \left(\widetilde{E}_1^{(m)},\widetilde{E}_2^{(m)}\right)=m-w_2$. To this end, let $d_1,d_2\ge 0$ such that $d_1+d_2= m-w_2$, which implies 
that $d_1$ and $d_2$ are both not larger than $m-t$. 
Then, we consider two sub-cases.
\begin{itemize}
 \item If $d_2 \le t$, it is clear because of the structure of the matrices that the first $d_1$ rows of $\widetilde{E}_1^{(m)}$ together with the 
first $d_2$ rows of $\widetilde{E}_2^{(m)}$ are linearly independent, as $D_1$ and $D_2$ generate a $(0,2)$-sequence. This is guaranteed 
since we know that $d_1$ and $d_2$ are both not larger than $m-t$. 

\item If $d_2 >t$, we proceed as follows. Assume to the contrary that the first  $d_1$ rows of $\widetilde{E}_1^{(m)}$ together with the 
first $d_2$ rows of $\widetilde{E}_2^{(m)}$ were not linearly independent. 

By the structure of $\widetilde{E}_1^{(m)}$ and $\widetilde{E}_2^{(m)}$, this would immediately imply that 
the first $d_1$ rows of $D_1^{(m-t)}$ together with the first $d_2-t$ rows of $D_2^{((m-t)\times (m-t-w_2))}$ are not linearly independent, where 
$d_1 + d_2-t = m-t-w_2$. Note, however, that $D_1^{(m-t)}$ contains $D_1^{(m-t-w_2)}$ as its left upper submatrix, and also $D_2^{((m-t)\times (m-t-w_2))}$ contains
$D_2^{(m-t-w_2)}$ as its left upper submatrix. By the property that $D_1$ and $D_2$ generate a $(0,2)$-sequence, and by the assumption that $m-w_2\ge t$, 
the first $d_1$ rows of $D_1^{(m-t-w_2)}$ together with the first $d_2-t$ rows of $D_2^{(m-t-w_2)}$ must be linearly independent. The same must, however, 
then also hold for the corresponding rows of $D_1^{(m-t)}$ and $D_2^{((m-t)\times (m-t-w_2))}$, which yields a contradiction. 
\end{itemize}
Hence we have shown that $\rho_m \left(\widetilde{E}_1^{(m)},\widetilde{E}_2^{(m)}\right) \ge m-w_2$, and by Theorem \ref{lem:t_value_red} we must actually have $\rho_m \left(\widetilde{E}_1^{(m)},\widetilde{E}_2^{(m)}\right)= m-w_2$.

\paragraph{\textbf{Case 2:} \boldmath{$\max\{t,w_2\}=t$}.} 
We claim that $\rho_m \left(\widetilde{E}_1^{(m)},\widetilde{E}_2^{(m)}\right)=m-t$. To this end, let $d_1,d_2\ge 0$ such that $d_1+d_2= m-t$. Also here, we distinguish 
two sub-cases. 
\begin{itemize}
 \item If $d_2 \le t$, it is obvious that the first $d_1$ rows of $\widetilde{E}_1^{(m)}$ together with the 
first $d_2$ rows of $\widetilde{E}_2^{(m)}$ are linearly independent, as $D_1$ and $D_2$ generate a $(0,2)$-sequence. This is guaranteed 
since we know that $d_1$ and $d_2$ are both not larger than $m-t$. 

\item If $d_2 >t$, we proceed as follows. Assume to the contrary that the first  $d_1$ rows of $\widetilde{E}_1^{(m)}$ together with the 
first $d_2$ rows of $\widetilde{E}_2^{(m)}$ were not linearly independent. 

By the structure of $\widetilde{E}_1^{(m)}$ and $\widetilde{E}_2^{(m)}$, this would immediately imply that 
the first $d_1$ rows of $D_1^{(m-t)}$ together with the first $d_2-t$ rows of $D_2^{((m-t)\times (m-t-w_2))}$ are not linearly independent, where 
$d_1 + d_2-t = m-2t\le m-t-w_2$. Note, however, that $D_1^{(m-t)}$ contains $D_1^{(m-t-w_2)}$ as its left upper submatrix, and also $D_2^{((m-t)\times (m-t-w_2))}$ contains
$D_2^{(m-t-w_2)}$ as its left upper submatrix. By the property that $D_1$ and $D_2$ generate a $(0,2)$-sequence, by the 
fact that $d_1 +d_2 \le m-t-w_2$, and by the assumption that $m-w_2\ge t$, 
the first $d_1$ rows of $D_1^{(m-t-w_2)}$ together with the first $d_2$ rows of $D_2^{(m-t-w_2)}$ must be linearly independent. The same must, however, 
then also hold for the corresponding rows of $D_1^{(m-t)}$ and $D_2^{((m-t)\times (m-t-w_2))}$, which yields a contradiction. 
\end{itemize}
In summary, we have shown that \eqref{eq:claim_rho_CT_strict} is sharp for strict digital nets.
\end{remark}

\subsection{Column reduction for $(t,m,{\bf{\textit{e}}},s)$-nets}

In \cite{T13} Tezuka introduced the concept of $(t,m,\Bf{e},s)$-nets, which are a generalization of $(t,m,s)$-nets. In this section, we briefly look at the quality parameter of column reduced nets under this generalized definition of nets. However, for the rest of the paper, we shall then stick to the notion of $(t,m,s)$-nets again.

\begin{definition}\label{def:tmes_nets}
    Let $\Bf{e} = (e_1, \dots, e_s)$ and $\Bf{d} = (d_1, \dots, d_s)$ be integer vectors with $e_i \geq 1$ and $d_i \geq 0$ for $i \in\{1,\ldots,s\}$, where $s \geq 1$ is the dimension. Let $t,m$ be non-negative integers with $0 \leq t \leq m$. A point set $\cP \subset [0,1)^s$ with $b^m$ points is called a $(t,m,\Bf{e},s)$-net in base $b$ if every elementary interval $J \subseteq [0,1)^s$ of volume $b^{t-m}$ and of the form
    \[
    J = \prod_{j=1}^{s} \Big[ \frac{a_j}{b^{e_jd_j}}, \frac{a_j + 1}{b^{e_jd_j}} \Big)
    \]
    contains exactly $b^t$ points of $\cP$, where $0 \leq a_j < b^{e_jd_j}$ for $j\in\{1,\ldots,s\}$ and $\Bf{d}$ satisfies the equation $e_1d_1 + \dots + e_sd_s = m-t$. 
\end{definition}
If we choose $\Bf{e} = (1,\dots,1) \in \NN^s$, we obtain the classical definition of a $(t,m,s)$-net as given in \cref{def:tms_net}. For $(t,m,s)$-nets, we have the propagation rule that a $(t,m,s)$-net in base $b$ is also a $(v,m,s)$-net in base $b$ for any integer $v$ with $t \leq v \leq m$. However, with the above definition of $(t,m,\Bf{e}, s)$-nets, we do not have this propagation rule. In \cite{HN13}, the authors provided a revised definition of $(t,m,\Bf{e},s)$-nets which ensures the above mentioned propagation rule. In this section, however, we work with the original definition provided by Tezuka in \cite{T13}.

We note that all $(t,m,s)$-nets are also $(t,m, \Bf{e},s)$-nets, however for certain values of $\Bf{e}$, we can obtain a lower $t$ value for the corresponding $(t,m, \Bf{e},s)$-net. In particular, for column-reduced digital nets, we can find certain examples where at least for some choices of $\Bf{e}$, the reduced net retains the original quality parameter $t$. Let us give some examples.
\begin{example}
    Let $b=2, s=2, m=4$ and consider the $(0,4,2)$-net derived from the Sobol' sequence, given by the generating matrices
\begin{equation*}
C_1 = \begin{pmatrix}
        1 & 0 & 0 & 0 \\
        0 & 1 & 0 & 0 \\
        0 & 0 & 1 & 0 \\
        0 & 0 & 0 & 1
       \end{pmatrix}, \quad
C_2 = \begin{pmatrix}
        1 & 1 & 1 & 1 \\
        0 & 1 & 0 & 1 \\
        0 & 0 & 1 & 1 \\
        0 & 0 & 0 & 1
       \end{pmatrix} .
\end{equation*}
Let $w_1 = 0$ and $w_2 = 1$, then the resulting column reduced digital net is a $(1,4,2)$-net according to \cref{lem:t_value_red}. However, for $\Bf{e} = (e_1,e_2)$ chosen such that $(e_1d_1, e_2d_2) = (4,0)$ is the only solution to the equation $e_1d_1 + e_2d_2 = 4$, we obtain a column reduced net that is still a $(0,4,\Bf{e},2)$-net. This is because the only elementary intervals $J$ that satisfy the conditions in \cref{def:tmes_nets} are of the form
\[
J = [a_1/2^4, (a_1 + 1)/2^4) \times [0,1).
\]
Thus, the net property depends only on the first coordinates in the point set $\cP$, and since we do not set any columns of $C_1$ to zero, i.e., $w_1 = 0$, the resulting column reduced net is a $(0,4,\Bf{e},2)$-net. Some concrete choices for $\Bf{e}$ with the above property are $(e_1,e_2) = (2,3)$, or $(e_1,e_2) = (1,e_2)$ where $e_2 > 4$, or $(e_1,e_2) = (4,e_2)$ where $e_2 = 3 $ or $ e_2 > 4$. 
\end{example}
In general, given a $(t,m,s)$-net $\cP$ derived from a $(t,s)$-sequence and reduction indices $0=w_1 \leq w_2 \leq \dots \leq w_s$, for $\Bf{e} = (e_1,\dots,e_s) = (m-t,k,\dots,k)$, where either $1< k < m-t$ with $\gcd(k,m-t) = 1$ or $k > m-t >0$, the only solution to the equation $\sum_{j=1}^{s} e_jd_j = m-t$ is $\Bf{d} = (1,0,\dots,0)$. Thus, the column reduced net $\widetilde{\cP}$ is an $(m-t,m,\Bf{e},s)$-net for the above choice of $\Bf{e}$.

One might also consider digital $(t,m,s)$-nets generated by $C_1,\dots,C_s$ where the $C_j$ for $2\leq j \leq s$ are derived from a $(t,s)$-sequence but $C_1$ is not necessarily derived from a digital sequence, since we usually choose the first reduction index $w_1 = 0$. In this case, one could perhaps find more choices of $\Bf{e} = (e_1,\dots,e_s)$ such that the corresponding column reduced digital net is a $(t,m,\Bf{e},s)$-net, depending on the reduction indices $w_2,\dots,w_s$. 

A general and complete analysis of reduced $(t,m,\Bf{e},s)$-nets could be an interesting subject for future research.

\section{Projections of column reduced digital nets} \label{sec:proj_col_red}

Due to the important role of the $t$-value, one sometimes also considers a 
slightly refined notion of a $(t,m,s)$-net, which is then referred to as a 
$((t_\setu)_{\setu \subseteq [s]},m,s)$-net, where $[s]:=\{1,\ldots,s\}$.
The latter notion means that for any $\setu\neq\emptyset$, $\setu\subseteq [s]$, the projection of the net onto those components with indices in $\setu$
is a $(t_\setu,m,\abs{\setu})$-net. The notion of a $((t_\setu)_{\setu \subseteq [s]},s)$-sequence is defined analogously. Moreover, for $\setu\neq \emptyset$, we write 
$\overline{\setu}:=\max (\setu)$.

If we assume (which we always do in this paper) that the reduction indices 
satisfy $0=w_1\le w_2 \le \cdots \le w_s$, then, for any non-empty $\setu\subseteq [s]$, the reduction index $w_{\overline{\setu}}$ is the largest among all reduction indices corresponding to $\setu$. This yields the 
following adaption of Theorem \ref{lem:t_value_red}, which obviously can be shown in the same manner.

\begin{corollary}\label{lem:t_value_red_proj}
Let $\cP$ be a digital $((t_\setu)_{\setu \subseteq [s]},m,s)$-net over $\Fb$ with generating matrices $C_1^{(m)}, \dots ,C_s^{(m)}$, which has been derived from a digital $((t_\setu)_{\setu \subseteq [s]},s)$-sequence, 
where we assume that $m\ge t$. 
Let $\CT_1^{(m)},\ldots,\CT_s^{(m)}$ be the reduced generating matrices with respect to reduction indices $0= w_1 \leq \dots \leq w_s$ and let $(\widetilde{t}_\setu)_{\setu \subseteq [s]}$ 
be the minimal quality parameters of the projections of the net generated by the $\CT_j^{(m)}$. Then, for every non-empty $\setu\subseteq [s]$,
\begin{equation*}
    \max\{0,m-w_{\overline{\setu}}-t_{\setu}\} \leq \rho_m((\CT_j^{(m)})_{j\in\setu}) \leq \max\{0,m - w_{\overline{\setu}}\},
\end{equation*}
and $ \widetilde{t}_{\setu} \le \min\{m,w_{\overline{\setu}} +t_{\setu}\}$.

\medskip

Furthermore, if, for a non-empty $\setu\subseteq [s]$, the projection of $\cP$ onto the components in $\setu$ is a strict digital $(t_{\setu},m,\abs{\setu})$-net, it is true that
\begin{equation*}
    \rho_m ((\CT_j^{(m)})_{j\in\setu}) \leq \max\{0,m - \max\{t_{\setu},w_{\overline{\setu}}\}\}.
\end{equation*}
\end{corollary}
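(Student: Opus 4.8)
The plan is to deduce the corollary from \cref{lem:t_value_red} by a coordinate projection, with essentially no new computation. I would fix a non-empty $\setu\subseteq[s]$ and pass to the projection of $\cP$ onto the coordinates indexed by $\setu$. By the definition of a $((t_\setu)_{\setu \subseteq [s]},m,s)$-net, this projection is a digital $(t_\setu,m,\abs{\setu})$-net whose generating matrices are exactly $\{C_j^{(m)}\}_{j\in\setu}$; moreover it is derived from the corresponding projection of the underlying $((t_\setu)_{\setu \subseteq [s]},s)$-sequence, which is a digital $(t_\setu,\abs{\setu})$-sequence. The reduced matrices of the projected net are then precisely $\{\CT_j^{(m)}\}_{j\in\setu}$, built from the reduction indices $\{w_j\}_{j\in\setu}$.

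The first thing I would check is that the restricted family $\{w_j\}_{j\in\setu}$ plays the role that $0=w_1\le\cdots\le w_s$ played in \cref{lem:t_value_red}. Being a subsequence of a non-decreasing sequence, it is itself non-decreasing, and its maximum is $w_{\overline{\setu}}$ with $\overline{\setu}=\max(\setu)$, as already noted above. Crucially, I would verify that the proof of \cref{lem:t_value_red} never uses the normalization $w_1=0$: it relies only on the fact that $w_j\le w_s$ for every $j$ (so that the first $m-w_s$ columns of each $\CT_j^{(m)}$ agree with those of $C_j^{(m)}$) and on the $(t,s)$-sequence property supplying row-independence after truncation of the generating matrices to their top-left $(m-w_s)\times(m-w_s)$ submatrices. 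This verification is the one genuine obstacle, and I expect it to be the main (though essentially routine) point to confirm, since the special choice $(0,\dots,0,m-w_s)$ and the projection-onto-the-first-$(m-w_s)$-columns argument must be seen to survive the relabelling.

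Once that is in place, the entire argument transfers verbatim under the substitutions $[s]\mapsto\setu$, $s\mapsto\abs{\setu}$, $t\mapsto t_\setu$, and $w_s\mapsto w_{\overline{\setu}}$. The two-sided estimate \eqref{eq:claim_rho_CT} then gives
\[
\max\{0,m-w_{\overline{\setu}}-t_\setu\}\le \rho_m\big((\CT_j^{(m)})_{j\in\setu}\big)\le \max\{0,m-w_{\overline{\setu}}\},
\]
and combining its lower bound with \eqref{eq:rho_t_relation} for the $\abs{\setu}$-dimensional projected net yields $\widetilde{t}_\setu\le\min\{m,w_{\overline{\setu}}+t_\setu\}$. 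For the strict case, if the projection onto $\setu$ is a strict digital $(t_\setu,m,\abs{\setu})$-net, I would apply the strict bound \eqref{eq:claim_rho_CT_strict} under the same substitutions to obtain $\rho_m((\CT_j^{(m)})_{j\in\setu})\le\max\{0,m-\max\{t_\setu,w_{\overline{\setu}}\}\}$, completing every claim of the corollary.
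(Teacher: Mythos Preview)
Your proposal is correct and matches the paper's approach: the paper simply states that the corollary ``obviously can be shown in the same manner'' as \cref{lem:t_value_red}, and your write-up makes this explicit by projecting onto $\setu$ and re-running the argument with the substitutions $t\mapsto t_\setu$, $w_s\mapsto w_{\overline{\setu}}$. Your observation that the proof of \cref{lem:t_value_red} never actually uses the normalization $w_1=0$ (only the monotonicity $w_j\le w_s$) is exactly the small check needed to justify the transfer, and is the only point the paper leaves implicit.
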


\section{Applications of column reduced digital nets} \label{sec:app_col_red}

\subsection{A reduced matrix product algorithm} \label{subsec:red_mat_column}

In this section, we return to the problem outlined in Section \ref{sec:intro}. Let $P$ be a digital $(t,m,s)$-net over $\FF_b$, with 
generating matrices $C_1^{(m)},\ldots,C_s^{(m)}$. Let $\bsw=(w_j)_{j=1}^s \in \NN_0^s$ be a sequence of reduction indices with $0=w_1\le w_2 \le \dots \le w_s$. 
Let $s^*\le s$ be the largest index such that $w_{s^*} < m$. 
Let $\CT_1^{(m)},\ldots,\CT_s^{(m)}$ be the reduced generating matrices corresponding to $w_1,\ldots,w_s$, and let $Q$ be the corresponding 
reduced digital net. Let $\bsx_0,\ldots,\bsx_{N-1}$ be the points of $Q$, where we interpret $\bsx_0,\ldots,\bsx_{N-1}$ as column vectors. 
Let 
\[
 X=[\bsx_0^\top,\bsx_1^\top,\ldots,\bsx_{N-1}^\top ]^\top
\]
be the $N\times s$ matrix whose $k$-th row is the $k$-th point of $Q$ for $0\le k\le N-1$.

Let $\bsxi_j$ denote the $j$-th column of $X$, i.e., $X=[\bsxi_1,\bsxi_2,\ldots,\bsxi_s]$. Let $A=[\bsa_1,\ldots,\bsa_s ]^\top$, where 
$\bsa_j \in \RR^{1 \times \tau}$ is the $j$-th row of $A$. Then we have 
\begin{equation}\label{eq:MM_product}
 XA=[\bsxi_1,\bsxi_2,\ldots,\bsxi_s] \cdot [\bsa_1,\ldots,\bsa_s ]^\top=
 \bsxi_1 \bsa_1 + \bsxi_2 \bsa_2 + \cdots +\bsxi_s \bsa_s.
\end{equation}
We will make use of a certain inherent repetitiveness of the reduced net $Q$, which we 
will illustrate by considering a reduction index $0 \le w_j < m$ for $1\le j\le s^*$, and the corresponding generator matrix $\CT_j^{(m)}$. The $j$-th components of the $N=b^m$ points of $Q$ (i.e., the $j$-th column $\bsxi_j$ of $X$) are then given by
\begin{eqnarray*}
 \bsxi_j &=& \left(\left(\CT_j^{(m)} \overrightarrow{0}  \right)\cdot(b^{-1},\ldots,b^{-m}),\ldots,
             \left(\CT_j^{(m)} \overrightarrow{(b^m-1)}  \right)\cdot(b^{-1},\ldots,b^{-m})\right)^\top \\
&=& (\underbrace{X_j,\ldots,X_j}_{b^{w_j}\ \rm times})^\top, 
\end{eqnarray*}
where, as above, we write $\overrightarrow{k}$ to denote the vector of base $b$ digits of length $m$ for $k\in\{0,1\ldots,b^m-1\}$, and where
\[
 X_j= \left(\left(\CT_j^{(m)} \overrightarrow{0}  \right)\cdot(b^{-1},\ldots,b^{-m}),\ldots,
             \left(\CT_j^{(m)} \overrightarrow{(b^{m-w_j}-1)}  \right)\cdot(b^{-1},\ldots,b^{-m})\right)^\top.
\]
The reason for this repetitive structure is that, for any $w_j$ with $0 <w_j < m$, the last $w_j$ columns of $\CT_j^{(m)}$ are equal to zero, and thus, in the product $\CT_j^{(m)}\overrightarrow{k}$, the last $w_j$ entries of $\overrightarrow{k}$ become irrelevant. We will exploit this structure within $Q$ to derive a fast matrix-matrix multiplication algorithm to compute $XA$. 

Based on the above observations, it is possible to formulate the following algorithm to compute \eqref{eq:MM_product} in an efficient way.
Note that for $j>s^*$ the $j$-th column of $X$ consists only of zeros, so there is nothing to compute for the entries of $X$ corresponding 
to these columns. 

\begin{algorithm}[H]
\caption{Fast reduced matrix-matrix product using column reduced digital nets}
\label{alg:fast-mv-prod}
\vspace{5pt}

\begin{flushleft}
\textbf{Input:} 

Matrix $A \in \R^{s \times \tau}$, integer $m \in \N$, prime $b$, reduction indices $0=w_1\le w_2 \le \cdots \le w_s$, 
corresponding generating matrices $\CT_1^{(m)},\ldots,\CT_s^{(m)}$ of a reduced digital net.
\end{flushleft}

\begin{algorithmic}
	\STATE Set $N=b^m$ and set $P_{s^*+1} = \bszero_{1 \times \tau} \in \R^{1 \times \tau}$.
	\FOR{$j=s^*$ {\bf to} $1$}
	\STATE $\bullet$ Compute $X_j$ as
	\begin{equation*}
	X_j = \left(\left(\CT_j^{(m)} \overrightarrow{0}  \right)\cdot(b^{-1},\ldots,b^{-m}),\ldots, \left(\CT_j^{(m)} \overrightarrow{(b^{m-w_j}-1)}  \right)\cdot(b^{-1},\ldots,b^{-m})\right)^\top \in \R^{ b^{m-w_j} \times 1} .
	\end{equation*}
	\vspace{-5pt}
	\STATE $\bullet$ Compute $P_j$ as
	\begin{equation*}
	P_j =
	\rotatebox[origin=c]{90}{$b^{\min(w_{j+1},m)-w_j}$ times}\left\{
	\begin{pmatrix}
	P_{j+1} \\
	P_{j+1} \\
	\vdots \\
	P_{j+1} \\
	\end{pmatrix}
	\right.
	+
	X_j  \bsa_j \in \R^{b^{m-w_j} \times \tau},
	\end{equation*}
	\hspace{5pt} where $\bsa_j \in \R^{1 \times \tau}$ denotes the $j$-th row of the matrix $A$.
	\ENDFOR
	\STATE Set $P = P_1$.
\end{algorithmic}
\vspace{5pt}

\begin{flushleft}
 \textbf{Return:} Matrix product $P = X A$.
\end{flushleft}
\end{algorithm}
\begin{remark}\label{remark:theoretical_runtime}
 The number of computations needed for Algorithm \ref{alg:fast-mv-prod} is of order 
 \[
  \calO \left( \sum_{j=1}^{s^*} b^{m-w_j} (\tau + m(m-w_j) ) \right).
 \]
Note that this algorithm also generates the points of the reduced digital net, whereas the standard multiplication or the analogous ``row reduced algorithm'' \cite[Algorithm 4]{DEHKL23}, both require pre-computed points of the digital net as input. Generating the points of 
a non-reduced digital net requires $\calO (b^m s m^2) $ operations, see also \cite[Algorithm 3]{DEHKL23} and the standard 
non-reduced matrix-matrix multiplication usually requires $\calO(b^m s \tau)$ operations. Therefore, \cref{alg:fast-mv-prod} improves the runtime of both steps. We also point out that the number of operations necessary for Algorithm \ref{alg:fast-mv-prod} is independent of $s$, and only depends on $s^*$. If the reduction indices $w_j$ grow sufficiently fast, then $s^*$ can be significantly lower than $s$.   
\end{remark}
\begin{remark}\label{remark:point_transform}
Let us consider mappings $\boldsymbol{\phi}: [0,1]^s \rightarrow\RR$ of the form 
$\boldsymbol{\phi}(\bsx)=(\phi_1 (x_1),\ldots,\phi_s (x_s))$ that we apply simultaneously to all sample points of a given digital net. In this 
case, we can easily adapt Algorithm \ref{alg:fast-mv-prod} such that 
we obtain a reduced net with points transformed by $\boldsymbol{\phi}$, but do not change the order of the computation time outlined in Remark \ref{remark:theoretical_runtime}. Such an adaption is useful when considering the case $D=\RR^s$, as pointed out in the introduction.
\end{remark}

\subsection{Error analysis} \label{subsec:err_ana}

In the beginning of the paper we set out the task 
of approximating the integral \eqref{eq:integral} by the QMC rule \eqref{eq:QMC_approx}. We have shown in the previous sections how to speed up the computation of the products $\bsx_k^\top A$ if we choose $\bsx_k$ as the points of a column reduced digital net. However, we should also keep in mind the integration error made by using a QMC rule of the form \eqref{eq:QMC_approx} using those $\bsx_k$. 

In this section, we restrict ourselves to the case $D=[0,1]^s$, such that we do not need to transform the sample points $\bsx_k$ before applying the corresponding QMC rule. In many applications of quasi-Monte Carlo, one considers so-called \textit{weighted function spaces} such as weighted Sobolev or weighted Korobov spaces (see, e.g., \cite{DKP22, DKS13, DP10}). The idea of studying weighted function spaces goes back to the seminal paper \cite{SW98} of 
Sloan and Wo\'{z}niakowski. The motivation for weighted spaces is that in many applications, different coordinates or different groups of coordinates may have different influence on a multivariate problem. To give a simple example, consider numerical integration of a function $f:[0,1]^s\to\RR$, where 
\[
  f(x_1,\ldots,x_s)=\mathrm{e}^{x_1}+ \frac{x_2 + \cdots + x_s}{2^s}.
\]
Clearly, for large $s$, the first variable has much more influence on this problem than the others. 
In order to make such observations more precise, one introduces weights, which are nonnegative real numbers 
$\gamma_{\fraku}$, one for each set $\fraku \subseteq \{1,\ldots,s\}$. Intuitively speaking, the number $\gamma_{\fraku}$ models the influence 
of the variables with indices in $\fraku$. Larger values of $\gamma_{\fraku}$ mean more influence, smaller values less influence. Formally, we
set $\gamma_{\emptyset}=1$, and we write $\bsgamma=\{\gamma_{\fraku}\}_{\fraku\subseteq \{1,\ldots, s  \}}$. These weights can now be used 
to modify the norm in a given function space, thereby modifying the set over which a suitable error measure, as for example the \textit{worst-case error}, of a problem is considered. By making 
this set smaller according to the weights (in the sense that also here, certain groups of variables may have less influence than others), a problem may thus become easier to handle and even lose the curse of dimensionality, provided that suitable conditions
on the weights hold. This effect also corresponds to intuition---if a problem depends
on many variables, of which only some have significant influence, it is natural to
expect that the problem will be easier to solve than one where all variables have the
same influence.

The \emph{weighted star discrepancy} is (via the well-known \emph{Koksma-Hlawka inequality} or its weighted version, see, e.g., \cite{DKP22,DP10,N92}), a measure of the worst-case quadrature error for a QMC rule with node set $Q$, with $b^m$ nodes, defined as
\begin{equation}
    D_{b^m,\bsgamma}^{*}(Q) := \sup_{x \in (0,1]^s} \max_{\emptyset \neq \fraku \subseteq [s]} \gamma_{\fraku} \abs{\Delta_{Q,\fraku}(\bsx)}, 
\end{equation}
where
\begin{equation}\label{def:DeltaP}
    \Delta_{Q,\fraku}(\bsx) := \frac{ \# \{(y_1,\ldots,y_s)\in Q  \colon y_j < x_j, \, \forall j \in \fraku \}}{b^m}- \prod_{j\in \fraku} x_j.
\end{equation}
Indeed, for certain weighted function classes based on Sobolev spaces of smoothness one, the weighted star discrepancy equals the worst-case quadrature error of a QMC rule with node set $Q$. Here, by the worst-case error, we mean the supremum of the integration error taken over the unit ball of the function class under consideration. We refer to \cite[Section 5.3]{DKP22} for further details on the weighted Koksma-Hlawka inequality.

As shown in \cite{P18}, we have 
\begin{eqnarray*}
    D_{b^m,\bsgamma}^{*}(Q) &=&  \max_{\emptyset \neq \fraku \subseteq [s]} 
    \sup_{\bsx \in (0,1]^s} \gamma_{\fraku} \abs{\Delta_{Q,\fraku}(\bsx)}\\
    &=&  \max_{\emptyset \neq \fraku \subseteq [s]} \gamma_{\fraku}
    \sup_{\bsx \in (0,1]^s}  \abs{\Delta_{Q,\fraku}(\bsx)}.
\end{eqnarray*}
In the latter expression, the suprema over $\bsx \in (0,1]^s$ just yield the values of the star discrepancy of the projections of $Q$, and thus, one can use existing discrepancy bounds for the projections of $Q$. Let us proceed as follows. 
Assume that $\cP$ is a digital $((t_\setu)_{\setu \subseteq [s]},m,s)$-net over $\Fb$ with $m \times m $ generating matrices $C_1^{(m)}, \dots ,C_s^{(m)}$ derived from a digital $((t_\setu)_{\setu \subseteq [s]},s)$-sequence, 
where $m\ge t$. Let $\widetilde{\cP}$ be the corresponding column reduced digital net based on the reduction indices $0=w_1 \le w_2 \le \cdots \le w_s$, 
and let $(\widetilde{t}_\setu)_{\setu \subseteq [s]}$ be the minimal quality parameters of the projections of $\widetilde{\cP}$.

Whenever we consider a $\setu \subseteq [s]$ that is not a subset of $[s^*]$, we know due to Corollary \ref{lem:t_value_red_proj} that the quality parameter of the corresponding 
projection of $\widetilde{P}$ is $m$ and therefore we can bound its discrepancy only trivially by $1$. Whenever we have $\setu \subseteq [s^*]$, however, we can use existing discrepancy bounds for the corresponding net. To this end, we use the results from \cite{FK13}, which are, to our best knowledge, the currently best-known general upper discrepancy bounds for $(t,m,s)$-nets. This yields, for any non-empty set $\setu\subseteq [s]$,
\begin{equation}\label{eq:disc_bound_proj}
 \sup_{\bsx \in (0,1]^s}  \abs{\Delta_{\widetilde{\cP},\fraku}(\bsx)} \le 
 \begin{cases}
  1 & \mbox{if $\setu\not\subseteq [s^*]$,}\\
  ( b^{\widetilde{t}_\setu} / b^m )\sum_{v=0}^{\abs{\setu}-1}   a_{v,b}^{(\abs{\setu})} m^v & \mbox{if $\setu\subseteq [s^*]$ and $\abs{\setu}\ge 2$,}\\
   b^{\widetilde{t}_\setu} / b^m  & \mbox{if $\setu\subseteq [s^*]$ and $\abs{\setu}= 1$,}
 \end{cases}
\end{equation}
where
\begin{eqnarray*}
a_{v,b}^{(\abs{\setu})}&=&{\abs{\setu}-2 \choose v} \left(\frac{b+2}{2}\right)^{\abs{\setu}-2-v}\frac{(b-1)^{v}}{2^{v}v!} (a_{0,b}^{(2)}+ \abs{\setu}^2-4)\nonumber\\
&&+{\abs{\setu}-2 \choose v-1} \left(\frac{b+2}{2}\right)^{\abs{\setu}-1-v}\frac{(b-1)^{v-1}}{2^{v-1}v!} a_{1,b}^{(2)},
\end{eqnarray*}
for $0\le v\le \abs{\setu}-1$,
with
\[
a_{0,b}^{(2)}=\begin{cases}
                                     \frac{b+8}{4}&\mbox{if $b$ is even,}\\ \\
                                     \frac{b+4}{2}&\mbox{if $b$ is odd,}
                                     \end{cases}
\hspace{1cm} \mbox{and} \hspace{1cm} 
a_{1,b}^{(2)}=\begin{cases}
                                     \frac{b^2}{4(b+1)}&\mbox{if $b$ is even,}\\ \\
                                     \frac{b-1}{4}&\mbox{if $b$ is odd.}
                                     \end{cases}
\]
This then yields
\begin{equation}\label{eq:disc_bound_global}
 D_{b^m,\bsgamma}^{*}(\widetilde{\cP}) \le  \max\left\{
 \max_{\substack{\emptyset \neq \fraku \subseteq [s]\\ \setu\not\subseteq [s^*]}} \gamma_{\fraku}
 ,\max_{\substack{\fraku \subseteq [s^*]\\ \abs{\setu}=1}} \gamma_{\setu}\,
 \frac{b^{\widetilde{t}_\setu}}{b^m}
 ,\max_{\substack{\fraku \subseteq [s^*]\\ \abs{\setu}\ge 2}} \gamma_{\setu}\,
 \frac{b^{\widetilde{t}_\setu}}{b^m}\sum_{v=0}^{\abs{\setu}-1}   a_{v,b}^{(\abs{\setu})} m^v 
 \right\}.
\end{equation}

Let us analyze the three maxima in the curly brackets in \eqref{eq:disc_bound_global} in greater detail. To this end, as also in \cite{DEHKL23},
we restrict ourselves to product weights in the following, i.e., we assume weights $\gamma_{\fraku} = \prod_{j\in \fraku} \gamma_j$ with $\gamma_1 \ge \gamma_2\ge  \cdots >0$.

For the first term, we proceed as in \cite{DEHKL23}, namely we use that $w_j\ge m$ if $j\in \setu\setminus [s^\ast]$, and obtain for $\setv = \setu \cap [s^\ast]$ that
\begin{equation}\label{eq:disc_term_1}
 \gamma_{\setu} \le \gamma_{\setv} \gamma_{\setu\setminus\setv} \frac{1}{b^m}\prod_{j\in\setu\setminus\setv} (1+b^{w_j})
 \le \frac{1}{b^m}  \prod_{j\in \setu} \gamma_j (1 + b^{w_j}). 
\end{equation}

For the second maximum in \eqref{eq:disc_bound_global}, note that we have 
$\setu=\{j\}$ for some $j\in [s^*]$, and hence
$\widetilde{t}_{\setu} \le \min\{m,w_j + t_{\{j\}}\}$ by Corollary \ref{lem:t_value_red_proj}. Consequently,
\begin{equation}\label{eq:disc_term_2}
 \max_{\substack{\fraku \subseteq [s^*]\\ \abs{\setu}=1}} \gamma_{\setu}
 \frac{b^{\widetilde{t}_\setu}}{b^m} \le 
 \max_{j\in [s^*]} \gamma_j \frac{b^{\min\{m,w_j + t_{\{j\}}\}}}{b^m}.
\end{equation}

For the third maximum in \eqref{eq:disc_bound_global}, we again use 
Corollary \ref{lem:t_value_red_proj}, and obtain 
\begin{equation}\label{eq:disc_term_3}
 \max_{\substack{\fraku \subseteq [s^*]\\ \abs{\setu}\ge 2}} \gamma_{\setu}\,
 \frac{b^{\widetilde{t}_\setu}}{b^m}\sum_{v=0}^{\abs{\setu}-1}   a_{v,b}^{(\abs{\setu})} m^v
 \le 
 \max_{\substack{\fraku \subseteq [s^*]\\ \abs{\setu}\ge 2}} \gamma_{\setu}\,
 \frac{b^{\min \{m, w_{\overline{\setu}}+t_{\setu}\}}}{b^m}\sum_{v=0}^{\abs{\setu}-1}   a_{v,b}^{(\abs{\setu})} m^v.
\end{equation}
Using these estimates in \eqref{eq:disc_bound_global}, we obtain
\begin{eqnarray}\label{eq:disc_bound_global_2}
 & D_{b^m,\bsgamma}^{*}(\widetilde{\cP})& \nonumber \\ &\le& \!\!\!\!\!\!\! \max\left\{
 \max_{\substack{\emptyset \neq \fraku \subseteq [s]\\ \setu\not\subseteq [s^*]}}
  \frac{1}{b^m}  \prod_{j\in \setu} \gamma_j (1 + b^{w_j}),
 \max_{j\in [s^*]} \gamma_j \frac{b^{w_j + t_{\{j\}}}}{b^m}
 ,\max_{\substack{\fraku \subseteq [s^*]\\ \abs{\setu}\ge 2}} \gamma_{\setu}\,
 \frac{b^{\min \{m, w_{\overline{\setu}}+t_{\setu}\}}}{b^m}\sum_{v=0}^{\abs{\setu}-1}   a_{v,b}^{(\abs{\setu})} m^v
 \right\}.\nonumber\\
\end{eqnarray}

\begin{remark}
 A few remarks on \eqref{eq:disc_bound_global_2} are in order. Note that only 
 the first term in the curly brackets in \eqref{eq:disc_bound_global_2} depends on $s$. The two remaining terms depend on $s^*$, which can be independent of $s$ if the reduction indices $w_j$ increase sufficiently fast. However, let us give a few further details on these observations.

We may want that the first term
 \[
  \frac{1}{b^m}  \prod_{j\in \setu} \gamma_j (1 + b^{w_j}) \le \frac{1}{b^m}  \prod_{j=1}^s \gamma_j (1 + b^{w_j})
 \]
be bounded by $\kappa/b^m$ for some constant $\kappa> 0$ independent of $s$. Let $j_0\in \NN$ be minimal such that $\gamma_j \le 1$ for all $j > j_0$. Then we impose $ \prod_{j=1}^{s} \gamma_j(1 + b^{w_j})\le  \gamma_1^{j_0}\prod_{j=1}^{s} (1 + \gamma_j b^{w_j})\le \kappa $. Hence it is sufficient to choose $\kappa >  \gamma_1^{j_0}$ and  for all $j \in [s]$, 
\begin{equation}\label{eq:wchoice}
w_j := \min\left(\floor{\log_b\left(\frac{\left(\frac{\kappa}{ \gamma_1^{j_0}}\right)^{1/s}-1}{\gamma_j}\right)},m\right).
\end{equation}
The choice of the $w_j$ in \eqref{eq:wchoice} depends on $s$. For sufficiently fast decaying weights $\gamma_j$, it is possible to choose the $w_j$ such that they no longer depend on $s$. Indeed, suppose, e.g., that $\gamma_j=j^{-2}$. Then we could choose the $w_j$ such that, for some $\tau\in(1,2)$,
\begin{equation}\label{eq:wchoice_2}
  w_j \le \min\left(\floor{\log_b\left( j^{2-\tau}\right)},m\right).
\end{equation}
This then yields
\begin{eqnarray*}
  \prod_{j=1}^s (1+\gamma_j b^{w_j})\le \exp\left(\sum_{j=1}^s \log (1+\gamma_j b^{w_j})\right)
  \le \exp\left(\sum_{j=1}^s \gamma_j b^{w_j}\right)\le \exp (\zeta(\tau)),
\end{eqnarray*}
where $\zeta(\cdot)$ is the Riemann zeta function. This gives a dimension-independent bound on the term $\prod_{j=1}^s \gamma_j (1+b^{w_j})$ from above, and hence a dimension-independent bound for all of $D_{b^m,\bsgamma}^{*}(\widetilde{\cP})$.

Regarding the second term in \eqref{eq:disc_bound_global_2}, this term only depends on one-dimensional projections of $\widetilde{\cP}$. In particular, if we choose the $w_j$ as in \eqref{eq:wchoice_2}, this expression should be easy to bound from above. This is even more so if the 
$t$-values of the one-dimensional projections of the non-reduced net $\cP$ are low, which may often be the case (in fact, the $t$-values of one-dimensional projections might even be zero in many examples). Thus we can bound the second term by an expression 
of the form $\kappa^* / b^m$, which only depends on $s^*$ but not on $s$.

Regarding the third term in \eqref{eq:disc_bound_global_2}, it crucially depends on the weights $\bsgamma$ and their interplay with the quality parameters of the projections of $\cP$, $t_{\setu}$. In particular, small quality parameters, in combination with sufficiently fast decaying weights and a suitable choice of the reduction indices $w_j$,
should yield tighter error bounds. Indeed, we could proceed similarly to \cite[Corollary 1]{FK13}, and bound the third term in \eqref{eq:disc_bound_global_2} by a term of the form 
\[
\max_{\substack{\fraku \subseteq [s^*]\\ \abs{\setu}\ge 2}} \gamma_{\setu}\,
\frac{1}{b^m} \left(  c_{\abs{\setu}} m^{\abs{\setu} -1} + \calO (m^{\abs{\setu} -2}) \right),
\]
where $c_{\abs{\setu}}$ depends on $b,{t_{\setu}}$ and $\abs{\setu}$, but not on $m$. Note that also the third term only depends on $s^*$ and not on $s$, so for sufficiently fast increasing reduction indices $w_j$, the dimension $s$ does not matter.
In summary, we obtain 
\[  
    D_{b^m,\bsgamma}^{*}(\widetilde{\cP}) \le \max\left\{ \frac{\kappa}{b^m}, 
    \frac{\kappa^*}{b^m}, \max_{\substack{\fraku \subseteq [s^*]\\ \abs{\setu}\ge 2}} \gamma_{\setu}\,
\frac{1}{b^m} \left(  c_{\abs{\setu}} m^{\abs{\setu} -1} + \calO (m^{\abs{\setu} -2}) \right)
    \right\}.
\]

\end{remark}
 
 \begin{remark}\label{rem_th_advantage}
 Note that our new result yields an advantage over the corresponding result for row reduced nets in \cite{DEHKL23}. In that paper, one needs to work with the quality parameters of the projections of the reduced net, which are, in general, not known. In the present paper, we benefit from the combination of the column reduction and the fact that the nets considered here are derived from digital sequences, which guarantees additional structure. Usually, it is computationally involved to determine the $t$-value of a digital net or sequence from the generating matrices, since many linear independence conditions need to be checked.
 Here, however, we can use Theorem \ref{lem:t_value_red} and Corollary \ref{lem:t_value_red_proj}, which relate the $t$-values of $\cP$ to those of $\widetilde{\cP}$, and thus give us an advantage. In particular, if $\cP$ is obtained from, say, a Sobol' or a Niederreiter sequence, it should be possible to have $t$-values that are guaranteed to be reasonably low.
\end{remark}
 
\section{Numerical experiments}
In this section, we test the computational performance of column reduced digital nets for matrix products $XA$, where $A$ is an $s\times \tau$ matrix, as detailed in \cref{subsec:red_mat_column}.
We implemented \cref{alg:fast-mv-prod} in the Julia programming language (Version 1.9.3).\footnote{Source code available at \url{https://github.com/Vishnupriya-Anupindi/ReducedDigitalNets.jl}}
In the following plots, we compare the runtime of \cref{alg:fast-mv-prod} to the standard matrix multiplication and also the matrix multiplication using the points from row reduced digital nets as given in \cite[Algorithm 4]{DEHKL23}. 
We remark that the reported runtimes are also affected by technical implementation details such as memory efficiency, a detailed discussion of which is out of scope here.

For the generating matrices $C_1^{(m)},\ldots,C_s^{(m)}$, we used random matrices in $\Fb^{m \times m}$, since the matrix product computation itself does not depend on the entries of the matrix, i.e, we get similar relations of runtimes if we use generating matrices of specific digital sequences like Sobol' or Niederreiter sequences. 

In \cref{fig:weight_comp} we see, for fixed $b = 2, m = 12$, and $\tau = 20$, how the runtime changes as we vary $s$. We compare this for two different choices of reduction indices $w_j$. We see that in this case, using column reduced digital nets in \cref{alg:fast-mv-prod} performs better than the use of row reduced digital nets in \cite[Algorithm 4]{DEHKL23} and also the standard matrix multiplication. 

As the reduction indices $w_j$ increase more slowly (as in \cref{fig:wt_inc_2}), the difference in performance between the standard multiplication and  \cref{alg:fast-mv-prod} reduces. We can see this also theoretically by inserting the weights in \cref{remark:theoretical_runtime}.

\begin{figure}[ht]
	\begin{subfigure}[t]{0.48\textwidth}
		\includegraphics[width=\textwidth]{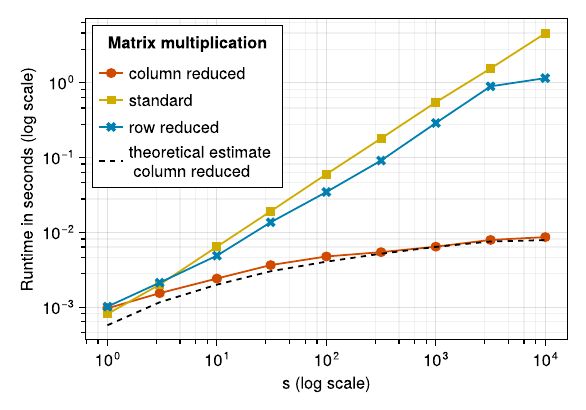}
		\subcaption{$w_j = \min(\lfloor \log_2(j) \rfloor, m)$}
	\end{subfigure}
	\hfill
	\begin{subfigure}[t]{0.48\textwidth}
		\includegraphics[width=\textwidth]{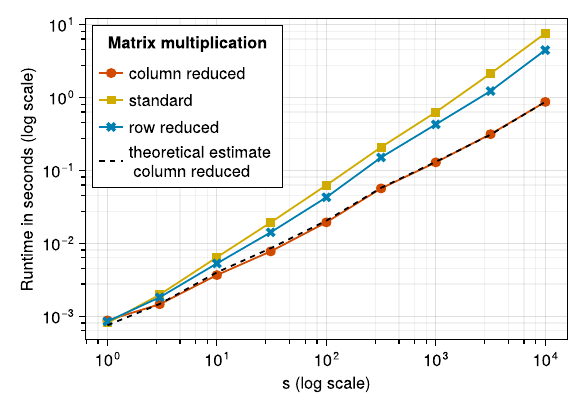}
		\subcaption{$ w_j = \min(\lfloor \log_2(j^{1/2}) \rfloor, m)$}
        \label{fig:wt_inc_2}
	\end{subfigure}
\caption{$m=12,\tau = 20$, varying $w_j$} \label{fig:weight_comp}
\end{figure}

\begin{figure}[h]
    \centering
		\includegraphics[width=0.48\textwidth]{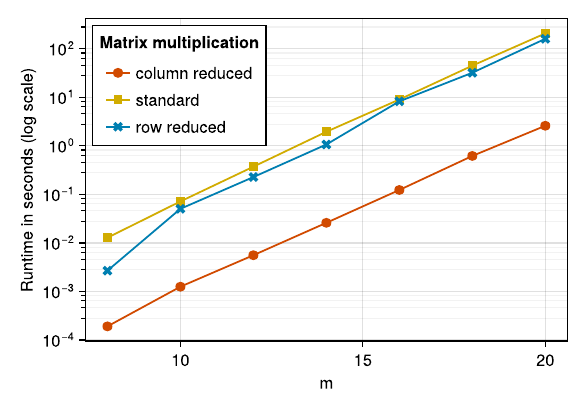}
	\caption{$s=800, \tau = 20$, varying $m$} \label{fig:vary_m}
\end{figure}

In \cref{fig:vary_m}, we study the behavior for fixed $b = 2, s = 800$, and $\tau = 20$ as $m$ increases. Note that we use the logarithmic scale for the time but not for $m$. We observe that also in this case \cref{alg:fast-mv-prod} seems to perform better than the row reduced case.

Overall, the numerical tests for the runtime using column reduced digital nets fit our theoretical estimate for the runtime as given in \cref{remark:theoretical_runtime} and comparison with the row reduced algorithm reveals that the column reduced algorithm could yield a better performance. Additionally to this practical advantage, column reduced matrices also have a theoretical advantage over row reduced matrices, as pointed out in Remark \ref{rem_th_advantage}.

\section{Conclusion}
 Column reduced digital nets have applications in the field of quasi-Monte Carlo methods. We can speed up the matrix-matrix multiplication in the quasi-Monte Carlo method by exploiting the repetitive structure of the points of a column reduced digital net. The bounds for the quality parameter ($t$-value) of column reduced digital nets have not been studied before.

In our research, we provide an algorithm for the matrix-matrix product using column reduced digital nets, which is faster than the standard matrix multiplication algorithm. In addition, we provide bounds for the $t$-value for column reduced digital nets. This is very essential for the error analysis of our method and has an advantage over the corresponding result for the row reduced nets in \cite{DEHKL23}. 

For future work, one could consider relaxing the conditions we impose on the $t$-value of the underlying digital sequence. One could also explore in-depth the interplay between column and row reduced digital nets. 

\section*{Acknowledgments}

The authors acknowledge the support of the Austrian Science Fund (FWF) Project P34808. For open access purposes, the authors have applied a CC BY public copyright license to any author accepted manuscript version arising from this submission.

\begin{small}
	\noindent\textbf{Authors' address:}\\
	
	\noindent Vishnupriya Anupindi\\ 
        Peter Kritzer\\
	Johann Radon Institute for Computational and Applied Mathematics (RICAM)\\
	Austrian Academy of Sciences\\
	Altenbergerstr. 69, 4040 Linz, Austria.\\
        \texttt{vishnupriya.anupindi@ricam.oeaw.ac.at}\\
	\texttt{peter.kritzer@ricam.oeaw.ac.at}\\

\end{small}


\begin{thebibliography}{99}

\bibitem{DEHKL23} J.~Dick, A.~Ebert, L.~Herrmann, P. Kritzer,  M.~Longo. The fast reduced QMC matrix-vector product. J. Comput. Appl. Math. 440, 115642, 2024.

\bibitem{DKLP15} J. Dick, P. Kritzer, G. Leobacher, F. Pillichshammer.
A reduced fast component-by-component construction of lattice points for integration in weighted spaces with fast decreasing weights.
J. Comput. Appl. Math. 276, 1--15, 2015.

\bibitem{DKP22} J.~Dick, P.~Kritzer, F.~Pillichshammer. 
	{\it Lattice Rules---Numerical Integration, Approximation, and Discrepancy}. 
	Springer, Cham, 2022.

\bibitem{DKLS15}
J.~Dick, F.Y.~Kuo, Q.T.~Le Gia, C.~Schwab. Fast QMC matrix-vector multiplication.
SIAM J. Sci. Comput. 37, A1436--A1450, 2015.

\bibitem{DKS13} J.~Dick, F.Y.~Kuo, I.H.~Sloan.
	High-dimensional integration---the quasi-Monte Carlo way. 
	Acta Numer. 22, 133--288, 2013.


\bibitem{DP10} J.~Dick, F.~Pillichshammer. {\it Digital Nets and Sequences. Discrepancy Theory and Quasi-Monte Carlo Integration.}
Cambridge University Press, Cambridge, 2010.

\bibitem{FK13} H. Faure, P. Kritzer. New star discrepancy bounds for $(t,m,s)$-nets and $(t,s)$-sequences. Monatsh. Math. 172, 55--75, 2013.

\bibitem{HN13} R.~Hofer, H.~Niederreiter.
A construction of (t,s)-sequences with finite-row generating matrices using global function fields. Finite Fields and Their Applications, 21, 97--110, 2013.

\bibitem{N92a} H.~Niederreiter. Low-discrepancy point sets obtained by digital constructions over finite fields, Czechoslovak Math.
J. 42, 143--166, 1992.

\bibitem{N92} H.~Niederreiter.  {\it Random Number Generation and Quasi-Monte Carlo Methods.}
CBMS-NSF Regional Conference Series in Applied Mathematics, 63. Society for Industrial and Applied Mathematics (SIAM), Philadelphia, 1992.



\bibitem{P18} F.~Pillichshammer. Tractability properties of the weighted star 
discrepancy of regular grids. J. Complexity 46, 103--112, 2018.


\bibitem{SW98} I.H.~Sloan, H.~Wo\'zniakowski.
When are quasi-Monte Carlo algorithms efficient for
high-dimensional integrals? J. Complexity 14, 1--33, 1998.

\bibitem{T13} S.~Tezuka. On the discrepancy of generalized Niederreiter sequences. J. Complexity 29, 240--247, 2013.

\end{thebibliography}
\end{document}